\newtheorem{theorem}{Theorem}
\newtheorem{proposition}{Proposition}
\newtheorem {corollary}{Corollary}
\newtheorem {lemma}{Lemma}
\theoremstyle{definition}
\newtheorem{definition}{Definition}
\newtheorem{example}{Example}
\newtheorem{remark}{Remark}
\DeclareMathOperator\Sym{Sym}
\DeclareMathOperator\pt{point}
\DeclareMathOperator\Pfaffian{Pfaffian}
\begin{document}

\title[]{An identity involving symmetric polynomials and the geometry of Lagrangian Grassmannians}

\author[]{Dang Tuan Hiep}


\address{Faculty of Mathematics and Computer Science, Da Lat University, No. 1, Phu Dong Thien Vuong Rd., Ward 8, Da Lat, Vietnam}
\email{hiepdt@dlu.edu.vn}

\author[]{Nguyen Chanh Tu}
\address{Faculty of Advanced Science and Technology (FAST), Da Nang University of Science and Technology, 54 Nguyen Luong Bang, Da Nang, Vietnam}
\email{nctu@dut.udn.vn}

\subjclass[2010]{Primary 14M15, 14N35; Secondary 05E05, 55N91}

\keywords{Equivariant cohomology; Gromov-Witten invariant; Lagrangian Grassmannian; interpolation; Schubert structure constant; symmetric polynomial; quantum cohomology}

\date{\today}

\begin{abstract}
We first prove an identity involving symmetric polynomials. This identity leads us into exploring the geometry of Lagrangian Grassmannians. As an insight applications, we obtain a formula for the integral over the Lagrangian Grassmannian of a characteristic class of the tautological sub-bundle. Moreover, a relation to that over the ordinary Grassmannian and its application to the degree formula for the Lagrangian Grassmannian are given. Finally, we present further applications to the computation of Schubert structure constants and three-point, degree $1$, genus $0$ Gromov--Witten invariants of the Lagrangian Grassmannian. Some examples together with explicit computations are presented.
\end{abstract}

\maketitle

\section{Introduction}

Throughout we always assume that all polynomials are over the field of rational numbers or any field of characteristic zero. For convenience, we shall write $[n]$ for the set $\{1,2,\ldots,n\}$. Let $\lambda_1,\ldots,\lambda_n$ be $n$ values such that $\lambda_i^2 \neq \lambda_j^2$ for all $i \neq j$ and $\lambda_i \neq 0$ for all $i$. For each $I \subseteq [n]$ and $i\in[n]$, we denote by 
$$\lambda_{i,I} = \left\{\begin{matrix}
\lambda_i & \text{ if } & i \in I\\
-\lambda_i & \text{ if } & i \not \in I
\end{matrix}\right. \text{ and } \lambda_I = (\lambda_{1,I}, \ldots, \lambda_{n,I}).$$
Recall that a polynomial $P(x_1,\ldots,x_n)$ in $n$ variables $x_1,\ldots,x_n$ is said to be {\it symmetric} if it is invariant under permutations of $x_1,\ldots,x_n$. The starting point of this paper is an identity involving symmetric polynomials. This identity leads us into exploring the geometry of Lagrangian Grassmannians.

\begin{theorem}\label{theo1}
Let $P(x_1,\ldots,x_n)$ be a symmetric polynomial of degree not greater than $\frac{n(n+1)}{2}$ in $n$ variables $x_1,\ldots,x_n$. Then the sum
\begin{equation}\label{la}
\sum_{I\subseteq [n]}\frac{P(\lambda_I)}{\displaystyle\prod_{i<j }(\lambda_{i,I} + \lambda_{j,I})\prod_{i=1}^n\lambda_{i,I}} = \frac{2^nc(n)}{n!},
\end{equation}
where $c(n)$ is the coefficient of $x_1^{2n-1}\ldots x_n^{2n-1}$ in the polynomial 
$$P(x_1,\ldots,x_n)\prod_{i\neq j}(x_i-x_j)\prod_{i<j}(x_i+x_j).$$ 
\end{theorem}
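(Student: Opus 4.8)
\medskip

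\noindent The plan is to prove first that the left-hand side of \eqref{la} is a constant depending only on the homogeneous part of $P$ of degree exactly $\tfrac{n(n+1)}{2}$, and then to compute that constant by evaluating on a basis of Schur polynomials. Write $\varepsilon=(\varepsilon_1,\dots,\varepsilon_n)\in\{\pm1\}^n$ and $\varepsilon\lambda=(\varepsilon_1\lambda_1,\dots,\varepsilon_n\lambda_n)$, so that the left-hand side of \eqref{la} is the rational function
\[
\Phi(\lambda)=\sum_{\varepsilon\in\{\pm1\}^n}\frac{P(\varepsilon\lambda)}{\displaystyle\prod_{i<j}(\varepsilon_i\lambda_i+\varepsilon_j\lambda_j)\;\prod_{i=1}^n\varepsilon_i\lambda_i},
\]
whose only possible poles lie on the hyperplanes $\lambda_i=\lambda_j$, $\lambda_i=-\lambda_j$ and $\lambda_i=0$. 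I would check that $\Phi$ is regular along each of these, hence a polynomial: along $\lambda_i=\lambda_j$ the singular summands are those with exactly one of $\varepsilon_i,\varepsilon_j$ equal to $+1$, and pairing such an $\varepsilon$ with the sign vector obtained by interchanging $\varepsilon_i$ and $\varepsilon_j$ makes the two residues cancel, because $P$ and the products $\prod_{i<j}(y_i+y_j)$, $\prod_i y_i$ are symmetric in $y=\varepsilon\lambda$ and the interchange changes the factor $\varepsilon_i\lambda_i+\varepsilon_j\lambda_j$ only by a sign; the poles along $\lambda_i=-\lambda_j$ and along $\lambda_i=0$ cancel similarly, using the pairings that interchange $\varepsilon_i,\varepsilon_j$ together with flipping both signs, respectively that send $\varepsilon_i\mapsto-\varepsilon_i$. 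Since for homogeneous $P$ every summand of $\Phi$ is homogeneous of degree $\deg P-\tfrac{n(n+1)}{2}\le 0$, decomposing a general $P$ into homogeneous parts shows that $\Phi$ is a constant depending only on the part of $P$ of degree exactly $\tfrac{n(n+1)}{2}$; when $\deg P<\tfrac{n(n+1)}{2}$ both sides vanish (the right-hand side because $P\prod_{i\neq j}(x_i-x_j)\prod_{i<j}(x_i+x_j)$ then has degree $<n(2n-1)$). So we may assume $P$ homogeneous of degree exactly $\tfrac{n(n+1)}{2}$.

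To identify the constant I would expand each summand of $\Phi$ as an iterated Laurent series in the region $|\lambda_1|\gg\cdots\gg|\lambda_n|$, using $(\varepsilon_i\lambda_i+\varepsilon_j\lambda_j)^{-1}=(\varepsilon_i\lambda_i)^{-1}\sum_{k\ge0}(-\varepsilon_j\lambda_j/\varepsilon_i\lambda_i)^{k}$ for $i<j$. Since $\Phi$ is a constant it equals the coefficient of $\lambda_1^0\cdots\lambda_n^0$ in this expansion; as the sum over $\varepsilon$ annihilates every monomial in which some $\varepsilon_i$ occurs to an odd power, and a short computation shows that the vanishing of all the $\lambda$-exponents forces every $\varepsilon_i$-exponent to be even, this yields
\[
\Phi=2^n\operatorname{CT}\!\left[\frac{P(t_1,\dots,t_n)}{\displaystyle\prod_{i=1}^n t_i\;\prod_{i<j}(t_i+t_j)}\right],
\]
where $\operatorname{CT}$ denotes the constant term of the Laurent expansion obtained by expanding each $(t_i+t_j)^{-1}$ with $i<j$ in ascending powers of $t_j/t_i$ (the displayed rational function is homogeneous of degree $0$, so this is meaningful).

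By linearity in $P$ it then suffices to treat $P=s_\lambda$, the Schur polynomial of a partition $\lambda$ with $|\lambda|=\tfrac{n(n+1)}{2}$ and at most $n$ parts, these being a basis of the degree-$\tfrac{n(n+1)}{2}$ symmetric polynomials. Using $\prod_{i<j}(t_i+t_j)=\prod_{i<j}(t_i^2-t_j^2)\big/\prod_{i<j}(t_i-t_j)$ together with the bialternant formula $s_\lambda\prod_{i<j}(t_i-t_j)=\det\big(t_i^{\,\lambda_j+n-j}\big)_{1\le i,j\le n}$, the displayed constant term becomes $2^n\operatorname{CT}\big[\det(t_i^{\,\lambda_j+n-j})\big/\big(\prod_i t_i\prod_{i<j}(t_i^2-t_j^2)\big)\big]$. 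Expanding the determinant over $S_n$ and each $(t_i^2-t_j^2)^{-1}$ as a geometric series in $t_j^2/t_i^2$, the constant-term equations force every exponent $\lambda_j+n-j$ to be odd; being $n$ distinct nonnegative integers with sum $n^2$, they must equal $1,3,\dots,2n-1$, so $\lambda$ is the staircase $(n,n-1,\dots,1)$. For every other $\lambda$ one gets $\Phi=0$, and the same parity obstruction makes $c(n)=0$. For the staircase a downward induction on $m=n,n-1,\dots,1$ in the constant-term equations kills all the geometric-series indices and forces the permutation to be the identity, so the constant term is $1$ and $\Phi=2^n$; computing $c(n)$ by the analogous determinant expansion of $s_{(n,\dots,1)}\prod_{i\neq j}(x_i-x_j)\prod_{i<j}(x_i+x_j)$ gives $c(n)=n!$, and hence $2^nc(n)/n!=2^n$ as well.

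\medskip

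\noindent\textbf{Main obstacle.} The crux is the passage from ``$\Phi$ is a constant'' to the constant-term formula: one must handle the iterated Laurent expansion over $|\lambda_1|\gg\cdots\gg|\lambda_n|$ with care and disentangle it cleanly from the summation over $\varepsilon$. Granting that reduction, the pole-cancellation pairings and the parity argument that pins down the staircase partition are essentially routine.
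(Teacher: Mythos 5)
Your argument is correct, but it takes a genuinely different route from the paper's. The paper derives the identity in a few lines from a multivariate Lagrange-interpolation lemma (Lemma \ref{lem2}): for monic polynomials $Q_i$ of degree $d_i+1$ with distinct roots and $\deg F\le\sum d_i$, the sum of $F(\alpha_1,\dots,\alpha_n)/\prod_i Q_i'(\alpha_i)$ over all root tuples equals the coefficient of $x_1^{d_1}\cdots x_n^{d_n}$ in $F$. Taking $Q_i(x)=\prod_j(x^2-\lambda_j^2)$ and $F=P\cdot\prod_{i\neq j}(x_i-x_j)\cdot\prod_{i<j}(x_i+x_j)$, the discriminant and the factor $\prod_{i<j}(x_i+x_j)$ annihilate every root tuple except the $2^n n!$ tuples of the form $(\lambda_{\sigma(1),I},\dots,\lambda_{\sigma(n),I})$, and evaluating $Q'$ at the roots produces exactly $n!/2^n$ times the left-hand side of (\ref{la}). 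Your proof replaces this with a three-step argument: residue cancellation plus homogeneity to show the sum is a constant, an iterated Laurent (constant-term) extraction to identify that constant, and a Schur-basis parity computation that concentrates everything on the staircase partition. What your route buys is an explicit link to the iterated-residue formulas of B\'erczi--Szenes and Zielenkiewicz that the paper only alludes to, and the observation that the left-hand side, as a linear functional on symmetric polynomials of top degree, is supported on the single Schur polynomial $s_{(n,n-1,\dots,1)}$; for that partition your constant term is immediate without any induction, since $s_{(n,\dots,1)}(t)=\prod_i t_i\prod_{i<j}(t_i+t_j)$ makes the rational function identically $1$, and the same substitution $u_i=x_i^2$ reduces $c(n)$ to the classical coefficient $n!$ of $\prod u_i^{n-1}$ in $\prod_{i\neq j}(u_i-u_j)$. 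What it costs is length and several routine-but-fiddly verifications (cancellation along the three families of hyperplanes, the exchange of the $\varepsilon$-sum with the constant-term operator), all of which do check out.
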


\begin{remark}
Note that the identity in Theorem \ref{theo1} can be rewritten as follows:
$$\sum_{I\subseteq [n]}\frac{(-1)^{|I|}P(\lambda_I)}{\displaystyle\prod_{i<j }(\lambda_{i,I} + \lambda_{j,I})} = \frac{2^nc(n)}{n!}\prod_{i=1}^n\lambda_i,$$
where $|I|$ means the number of elements in $I$. If the degree of $P(x_1,\ldots,x_n)$ is less than $\frac{n(n+1)}{2}$, then the sum (\ref{la}) is equal to zero.
\end{remark}

The identity in Theorem \ref{theo1} provides an effective way of dealing with integrals over Lagrangian Grassmannians. The idea is as follows. The localization formula in equivariant cohomology allows us to express integrals in terms of some data attached to the fixed points of a torus action. In particular, for the Grassmannians, we obtain interesting expressions involving rational polynomials. In the previous work \cite{H}, the author has discovered the case of classical Grassmannians. Continuously, this paper is related to the integral formula for Lagrangian Grassmannians. In order to state the result, we first recall the definition of a Lagrangian Grassmannian.

\begin{definition}
Let $V$ be a complex vector space of dimension $2n$ endowed with a symplectic (i.e., non-degenerate, skew-symmetric, bilinear) form. We denote by $LG(n)$ the {\it Lagrangian Grassmannian} parametrizing Lagrangian (i.e, maximal isotropic) subspaces of $V$.
\end{definition}
Note that $LG(n)$ is a smooth subvariety of the ordinary Grassmannian $G(n,2n)$ of $V$. Its dimension is
$$\dim(LG(n)) = \frac{n(n+1)}{2}.$$
The cohomology ring of $LG(n)$, which agrees with the ring of Schur $\tilde{Q}$--polynomials, was discovered by Pragacz \cite{P}, Pragacz--Ratajski \cite{PR}. For combinatorial aspects of this theory, we refer to the work of Boe--Hiller \cite{BH} and Stembridge \cite{S}. Recently, the study of Lagrangian Grassmannians has been focused on exploring the theory in equivariant and quantum setting. For the equivariant setting, we refer to the work of Ikeda \cite{I} and Ikeda--Naruse \cite{IN}. For the quantum setting, we refer to the work of Buch-Kresch-Tamvakis \cite{BKT1} and Kresch--Tamvakis \cite{KT1}. For more further results in this direction, we refer to the recent works of Buch--Kresch--Tamvakis \cite{BKT2, BKT3} and Ikeda-Mihalcea-Naruse \cite{IMN}.

We are mainly interested in the integration over the Lagrangian Grassmannian. More precisely, let us consider the following integral:
$$\int_{LG(n)}\Phi(\mathcal S),$$
where $\Phi(\mathcal S)$ is a characteristic class of the tautological sub-bundle $\mathcal S$ on $LG(n)$. Using the localization formula in equivariant cohomology, Zielenkiewicz \cite{Z} presented a way of expressing the integral as an iterated residue at infinity of a holomorphic function. As an insightful application, the identity in Theorem \ref{theo1} provides an effective way of handling such an expression.

\begin{theorem}\label{integral1}
Suppose that $\Phi(\mathcal S)$ is represented by a symmetric polynomial $P(x_1,\ldots,x_n)$ of degree not greater than $\frac{n(n+1)}{2}$ in $n$ variables $x_1,\ldots,x_n$ which are the Chern roots of the tautological sub-bundle $\mathcal S$ on $LG(n)$. Then the integral 
\begin{equation}\label{intfor}
\int_{LG(n)}\Phi(\mathcal S) = (-1)^{\frac{n(n+1)}{2}}\frac{c(n)}{n!},
\end{equation}
where $c(n)$ is the coefficient of $x_1^{2n-1}\cdots x_n^{2n-1}$ in the polynomial 
$$P(x_1,\ldots,x_n)\prod_{i\neq j}(x_i-x_j)\prod_{i<j}(x_i+x_j).$$
\end{theorem}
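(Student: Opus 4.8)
The plan is to evaluate the integral by torus localization and then to recognize the resulting sum over fixed points as the left-hand side of \eqref{la}. First I would put the symplectic vector space $V$ under the action of a torus $T=(\mathbb{C}^{*})^{n}$ for which there is a symplectic basis $e_{1},\ldots,e_{n},f_{1},\ldots,f_{n}$ on which $T$ acts with weights $\lambda_{1},\ldots,\lambda_{n},-\lambda_{1},\ldots,-\lambda_{n}$ respectively (so that the symplectic form is $T$-invariant), the $\lambda_{1},\ldots,\lambda_{n}$ being the standard generators of $H^{*}_{T}(\pt)$, which we regard as generic parameters as in the Introduction. The induced $T$-action on $LG(n)$ has exactly $2^{n}$ isolated fixed points, namely the Lagrangian coordinate subspaces: for $I\subseteq[n]$ write $p_{I}$ for the point corresponding to $\operatorname{span}\{e_{i}:i\in I\}\oplus\operatorname{span}\{f_{i}:i\notin I\}$. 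Since the tautological subbundle $\mathcal{S}$ is $T$-equivariant, $\Phi(\mathcal{S})$ has a canonical equivariant lift, and the localization formula in equivariant cohomology gives
\[
\int_{LG(n)}\Phi(\mathcal{S})=\sum_{I\subseteq[n]}\frac{\Phi(\mathcal{S})\big|_{p_{I}}}{e\bigl(T_{p_{I}}LG(n)\bigr)}.
\]

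Next I would identify the two local factors. The fibre $\mathcal{S}|_{p_{I}}$ is the coordinate Lagrangian defining $p_{I}$, whose $T$-weights are $\lambda_{i,I}$ for $i\in[n]$; hence the equivariant Chern roots of $\mathcal{S}|_{p_{I}}$ are $\lambda_{1,I},\ldots,\lambda_{n,I}$ and so $\Phi(\mathcal{S})|_{p_{I}}=P(\lambda_{I})$. For the tangent space I would invoke the standard identification $T_{L}LG(n)\cong\Sym^{2}(L^{*})$, which follows from writing $V=L\oplus L'$ with $L'\cong L^{*}$ via the symplectic form and observing that an infinitesimal deformation of a Lagrangian corresponds to a symmetric homomorphism $L\to L^{*}$. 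At $p_{I}$ the bundle $L$ has weights $\lambda_{i,I}$, so $\Sym^{2}(L^{*})$ has weights $-\lambda_{i,I}-\lambda_{j,I}$ for $1\le i\le j\le n$, and therefore
\[
e\bigl(T_{p_{I}}LG(n)\bigr)=\prod_{1\le i\le j\le n}\bigl(-\lambda_{i,I}-\lambda_{j,I}\bigr)=(-1)^{\frac{n(n+1)}{2}}\,2^{n}\Bigl(\prod_{i=1}^{n}\lambda_{i,I}\Bigr)\prod_{i<j}(\lambda_{i,I}+\lambda_{j,I}),
\]
where the factor $2^{n}\prod_{i=1}^{n}\lambda_{i,I}$ collects the diagonal terms $i=j$.

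Substituting these into the localization formula and pulling the scalar $(-1)^{n(n+1)/2}/2^{n}$ out of the sum (using that $(-1)^{n(n+1)/2}$ is its own inverse) rewrites the right-hand side as
\[
\frac{(-1)^{\frac{n(n+1)}{2}}}{2^{n}}\sum_{I\subseteq[n]}\frac{P(\lambda_{I})}{\prod_{i<j}(\lambda_{i,I}+\lambda_{j,I})\prod_{i=1}^{n}\lambda_{i,I}},
\]
which is precisely $(-1)^{n(n+1)/2}/2^{n}$ times the sum appearing in \eqref{la}. Since $\deg P\le\frac{n(n+1)}{2}$, Theorem \ref{theo1} applies and evaluates this sum as $2^{n}c(n)/n!$; the two powers of $2^{n}$ cancel, and one is left with $(-1)^{\frac{n(n+1)}{2}}c(n)/n!$, which is exactly \eqref{intfor}.

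The parts that will require care are entirely of a bookkeeping nature: fixing the equivariant structure and the sign convention for equivariant Chern classes so that the Chern roots of $\mathcal{S}|_{p_{I}}$ come out as $\lambda_{i,I}$ and not as their negatives, and justifying the computation of the $T$-weights of $T_{p_{I}}LG(n)\cong\Sym^{2}(L^{*})$. It is also worth stressing that each individual summand in the localization formula is a priori only a rational function of $\lambda_{1},\ldots,\lambda_{n}$; it is Theorem \ref{theo1} that collapses the entire sum to the claimed constant, and this is what makes localization genuinely effective here. (In the degenerate case $\deg P<\frac{n(n+1)}{2}$ both sides of \eqref{intfor} vanish, in agreement with the remark following Theorem \ref{theo1}.)
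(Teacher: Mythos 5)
Your proposal is correct and follows essentially the same route as the paper's own proof: localization at the $2^{n}$ coordinate Lagrangian fixed points $p_{I}$, the identification $T_{p_{I}}LG(n)\cong\Sym^{2}(\mathcal S^{\vee}|_{p_{I}})$ giving the Euler class $(-1)^{\frac{n(n+1)}{2}}2^{n}\prod_{i=1}^{n}\lambda_{i,I}\prod_{i<j}(\lambda_{i,I}+\lambda_{j,I})$, and then Theorem \ref{theo1} to evaluate the resulting sum. The only (cosmetic) difference is that you state the torus weights on the second half of the symplectic basis as $-\lambda_{i}$ explicitly, which is the convention the paper's subsequent computation implicitly uses.
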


After the first version of the paper was completed, Tamvakis informed us that in general Darondeau-Pragacz \cite{DP} obtain an integral formula for the symplectic Grassmann bundle $F^{\omega}(n)(E) \to X$ of a rank $2n$ symplectic vector bundle $E$. In particular, if $X$ is a point and $E$ is trivial, then the integral formula of Darondeau-Pragacz says that the integral $\int_{LG(n)}\Phi(\mathcal S)$ is equal to the coefficient of the monomial $x_1^{2n-1}x_2^{2n-2}\cdots x_n^{n}$ in the polynomial
$$P(x_1,\ldots,x_n)\prod_{i< j}(x_i-x_j)\prod_{i<j}(x_i+x_j).$$
It seems that the statement of Theorem \ref{integral1} is similar to the special case of Darondeau-Pragacz. However, here we consider the coefficient of the monomial $x_1^{2n-1}\cdots x_n^{2n-1}$ in the product of the polynomial 
$$P(x_1,\ldots,x_n)\prod_{i<j}(x_i+x_j)$$
by the discriminant 
$$\Delta = \prod_{i\neq j}(x_i-x_j)$$
instead of that of the monomial $x_1^{2n-1}x_2^{2n-2}\cdots x_n^{n}$ in the product of the same polynomial by the Vandermonde determinant  
$$a_{\delta} = \prod_{i<j}(x_i-x_j).$$
Note that the sign $(-1)^{\frac{n(n+1)}{2}}$ appears in the right-hand side of the formula (\ref{intfor}) is due to the variables $x_1,\ldots,x_n$ are assumed to be the Chern roots of the tautological sub-bundle $\mathcal S$ rather than the dual of $\mathcal S$.

Furthermore, our approach is completely different from that of Darondeau-Pragacz and probably close to that of B\'erczi-Szenes \cite{BS}, Tu \cite{T} and Zielenkiewicz \cite{Z}. We apply the localization formula and the identity in Theorem \ref{theo1} using instead of the residue at infinity. The identity is inspired by the previous work of the author \cite{H} and the interpolation formula for symmetric polynomials due to Chen-Louck \cite{CL}. There are other approaches to integral formulas in general:
\begin{itemize}
\item Using Grothendieck residues, we refer to the work of Akyildiz-Carrell \cite{AC}.
\item Using symmetrizing operators, we refer to the work of Brion \cite{B}.
\end{itemize}
Combining the integral formula in Theorem \ref{integral1} with that in \cite{H}, we are able to relate two integral formulas directly. This leads us into exploring further applications to the geometry of Lagrangian Grassmannians. More concretely, we obtain a degree formula for $LG(n)$ which is simply the degree formula of a Schubert variety with respect to the partition $(n-1,\ldots,1,0)$ in the ordinary Grassmannian $G(n,2n)$ (see Proposition \ref{degree}). Combining the Gaimbelli formula for $LG(n)$ due to Pragacz \cite{P} with the relation, we show that the Schubert structure constants, which occur in the cup product expansion in the cohomology ring of $LG(n)$, can be expressed as intersection numbers on the ordinary Grassmannian $G(n,2n)$ (see Theorem \ref{structure}). Similarly, by \cite[Proposition 4]{KT1}, we show that the three-point, degree 1, genus 0 Gromov-Witten invariants on $LG(n)$ can be expressed as intersection numbers on the classical Grassmannian $G(n+1,2n+2)$ (see Theorem \ref{gw}). 

The rest of the paper is organized as follows: The identity is proved in Section 2. Section 3 is to give a review of equivariant cohomology and prove the integral formula. In Section 4, we present the relation between the integral over the Lagrangian Grassmannian and that over the ordinary Grassmannian. The degree formula for $LG(n)$ is given in Section 5. In the last section, we present further applications related to the Schubert structure constants and three-point, degree 1, genus 0 Gromov-Witten invariants. Some examples together with the computation illustrated by \textsc{Singular} are also presented in the last section.
 
\section{Proof of the identity}

The key ingredient of the proof is two following lemmas.

\begin{lemma}\label{lem1}
Let $Q(x)$ be a monic polynomial of degree $d+1$ with distinct roots. Then the expression
$$\sum_{Q(\alpha)=0}\frac{\alpha^r}{Q'(\alpha)} = \begin{cases}
0 \quad \text{ if } \quad 0 \leq r < d\\ 
1 \quad \text{ if } \quad r=d
\end{cases}.$$
\end{lemma}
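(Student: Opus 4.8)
The plan is to recognize the sum as a classical Lagrange interpolation / partial-fraction identity. Write $Q(x) = \prod_{k=1}^{d+1}(x-\alpha_k)$ with the $\alpha_k$ distinct, so that $Q'(\alpha_j) = \prod_{k\neq j}(\alpha_j-\alpha_k)$. The quantity $\sum_{j} \alpha_j^r / Q'(\alpha_j)$ is then precisely the sum of the $r$-th powers of the nodes weighted by the reciprocals of the usual Lagrange denominators. I would first treat this via partial fractions: decompose the rational function $x^r/Q(x)$, which for $r \le d$ is proper (degree of numerator $< d+1 = \deg Q$), as
$$\frac{x^r}{Q(x)} = \sum_{j=1}^{d+1}\frac{\alpha_j^r}{Q'(\alpha_j)}\cdot\frac{1}{x-\alpha_j}.$$
Multiplying both sides by $x$ and letting $x\to\infty$, the left-hand side tends to $0$ when $r < d$ and to $1$ when $r = d$ (since $Q$ is monic of degree $d+1$, so $x^{d+1}/Q(x)\to 1$), while the right-hand side tends to $\sum_{j}\alpha_j^r/Q'(\alpha_j)$. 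This yields the claim directly.

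Alternatively — and this is the version I would actually write out, since it avoids any limiting argument — one can argue purely algebraically. The Lagrange interpolation formula states that any polynomial $f$ of degree $\le d$ equals $\sum_{j=1}^{d+1} f(\alpha_j)\prod_{k\neq j}\frac{x-\alpha_k}{\alpha_j-\alpha_k} = \sum_j \frac{f(\alpha_j)}{Q'(\alpha_j)}\cdot\frac{Q(x)}{x-\alpha_j}$. Apply this with $f(x) = x^r$ for $0 \le r \le d$ and compare the coefficient of $x^d$ on both sides: on the left it is $1$ if $r = d$ and $0$ if $r < d$; on the right, since $Q(x)/(x-\alpha_j)$ is monic of degree $d$, the coefficient of $x^d$ is exactly $\sum_j f(\alpha_j)/Q'(\alpha_j) = \sum_j \alpha_j^r/Q'(\alpha_j)$. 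This is the desired identity.

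There is really no serious obstacle here: the only thing to be careful about is the hypothesis that $Q$ has \emph{distinct} roots, which is what makes $Q'(\alpha_j)\neq 0$ and legitimizes both the partial-fraction decomposition and the interpolation formula. The monic assumption is used only to pin down the normalization of the $x^d$-coefficient (equivalently, the $x\to\infty$ behaviour) so that the answer is exactly $1$ rather than the leading coefficient's reciprocal. I would present the interpolation argument as the main proof and perhaps remark that the partial-fraction viewpoint gives the same result.
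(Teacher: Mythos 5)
Your proposal is correct, and the interpolation argument you designate as your main proof — applying Lagrange interpolation to $f(x)=x^r$ at the roots of $Q$ and comparing the coefficient of $x^d$ — is exactly the proof given in the paper. The partial-fraction variant you sketch is a fine alternative but is not needed.
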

\begin{proof}
Let $\gamma_0, \gamma_1, \ldots, \gamma_d$ be $d+1$ distinct roots of $Q(x)$. We then have 
$$\sum_{Q(\alpha)=0}\frac{\alpha^r}{Q'(\alpha)} = \sum_{i=0}^d\frac{\gamma_i^r}{\displaystyle\prod_{i\neq j}(\gamma_i-\gamma_j)}.$$
By the Lagrange interpolation formula for $x^r$ at $\gamma_0, \gamma_1, \ldots, \gamma_d$, we get
$$x^r = \sum_{i=0}^d\frac{\gamma_i^r\displaystyle\prod_{i\neq j}(x-\gamma_j)}{\displaystyle\prod_{i\neq j}(\gamma_i-\gamma_j)}.$$
This implies that
$$\sum_{i=0}^d\frac{\gamma_i^r}{\displaystyle\prod_{i\neq j}(\gamma_i-\gamma_j)} = \text{ the coefficient of } x^d = \begin{cases}
0 \quad \text{ if } \quad 0 \leq r < d\\ 
1 \quad \text{ if } \quad r=d
\end{cases}.$$
\end{proof}

\begin{lemma}\label{lem2}
Let $F(x_1,\ldots,x_n)$ be a polynomial in $n$ variables of degree not greater than $d_1+\cdots+d_n$ and $Q_1(x),\ldots,Q_n(x)$ be monic polynomials of degrees $d_1+1,\ldots,d_n+1$ with distinct roots. Then the expression
\begin{equation}\label{id1}
\sum_{Q_1(\alpha_1)=\cdots=Q_n(\alpha_n)=0}\frac{F(\alpha_1,\ldots,\alpha_n)}{Q'_1(\alpha_1)\cdots Q'_n(\alpha_n)}
\end{equation}
is independent of all the $Q_i$ and is equal to the coefficient of $x_1^{d_1}\cdots x_n^{d_n}$ in $F(x_1,\ldots,x_n)$.
\end{lemma}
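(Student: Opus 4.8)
The plan is to reduce Lemma \ref{lem2} to an $n$-fold application of Lemma \ref{lem1}. First I would observe that the sum in \eqref{id1} factors as a nested sum: for fixed $\alpha_2,\ldots,\alpha_n$ running over the roots of $Q_2,\ldots,Q_n$, the inner sum over the roots $\alpha_1$ of $Q_1$ is
$$\sum_{Q_1(\alpha_1)=0}\frac{F(\alpha_1,\alpha_2,\ldots,\alpha_n)}{Q_1'(\alpha_1)}.$$
Treating $\alpha_2,\ldots,\alpha_n$ as formal parameters for the moment, write $F$ as a polynomial in $x_1$ with coefficients that are polynomials in $x_2,\ldots,x_n$, say $F(x_1,\ldots,x_n) = \sum_{r\ge 0} c_r(x_2,\ldots,x_n)\, x_1^r$. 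By linearity and Lemma \ref{lem1} applied with $Q=Q_1$ (so $d=d_1$), every term with $r<d_1$ contributes $0$ and the term $r=d_1$ contributes exactly $c_{d_1}(\alpha_2,\ldots,\alpha_n)$; since $\deg F \le d_1+\cdots+d_n$, there are no terms with $r>d_1$ whose coefficient survives — more precisely, one should note that the degree bound forces $\deg_{x_1} F \le d_1 + \cdots + d_n$, which is not by itself $\le d_1$, so this needs a small argument. The clean way is to \emph{not} truncate but instead to carry all terms: terms with $r>d_1$ would produce, via Lemma \ref{lem1}, sums of the form $\sum \alpha_1^r/Q_1'(\alpha_1)$ for $r>d_1$, which are \emph{not} controlled by Lemma \ref{lem1} as stated. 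So the correct reduction is to peel off variables one at a time while tracking the total degree.

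Here is the order I would carry it out. I would prove the statement by induction on $n$. The base case $n=1$ is exactly Lemma \ref{lem1}: if $\deg F \le d_1$ then $\sum_{Q_1(\alpha)=0} F(\alpha)/Q_1'(\alpha)$ equals the coefficient of $x_1^{d_1}$ in $F$, which follows by writing $F = \sum_{r=0}^{d_1} c_r x_1^r$ and using linearity together with the two cases ($r<d_1$ gives $0$, $r=d_1$ gives $1$) of Lemma \ref{lem1}. For the inductive step, fix $\alpha_1$ ranging over the $d_1+1$ roots of $Q_1$ and write
$$\sum_{Q_1(\alpha_1)=0}\frac{1}{Q_1'(\alpha_1)}\left(\sum_{Q_2(\alpha_2)=\cdots=Q_n(\alpha_n)=0}\frac{F(\alpha_1,\alpha_2,\ldots,\alpha_n)}{Q_2'(\alpha_2)\cdots Q_n'(\alpha_n)}\right).$$
For each fixed value of $\alpha_1$, the function $x_2,\ldots,x_n \mapsto F(\alpha_1,x_2,\ldots,x_n)$ is a polynomial of degree at most $d_1+\cdots+d_n$, but I only need that it has degree at most $d_2+\cdots+d_n$ \emph{after} I have separated out the $x_1$-dependence. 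So instead I would first expand $F(x_1,\ldots,x_n) = \sum_{r} x_1^r\, G_r(x_2,\ldots,x_n)$ where $\deg G_r \le d_1+\cdots+d_n - r$; then
$$\sum_{Q_1(\alpha_1)=0}\frac{\alpha_1^r}{Q_1'(\alpha_1)}\cdot\Big(\text{sum over }\alpha_2,\ldots,\alpha_n\text{ of }G_r/(Q_2'\cdots Q_n')\Big).$$
By Lemma \ref{lem1} the scalar $\sum_{Q_1(\alpha_1)=0}\alpha_1^r/Q_1'(\alpha_1)$ vanishes for $r<d_1$ and equals $1$ for $r=d_1$; for $r>d_1$ it is some unknown scalar, but then $\deg G_r \le d_1+\cdots+d_n-r < d_2+\cdots+d_n$, so by the induction hypothesis the inner sum over $\alpha_2,\ldots,\alpha_n$ equals the coefficient of $x_2^{d_2}\cdots x_n^{d_n}$ in $G_r$, which is $0$ since $G_r$ has too small a degree to contain that monomial. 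Hence only $r=d_1$ survives, giving $\sum_{\alpha_2,\ldots,\alpha_n} G_{d_1}/(Q_2'\cdots Q_n') = [x_2^{d_2}\cdots x_n^{d_n}]\,G_{d_1} = [x_1^{d_1}x_2^{d_2}\cdots x_n^{d_n}]\,F$ by the induction hypothesis (note $\deg G_{d_1}\le d_2+\cdots+d_n$).

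The main subtlety — and the one place the argument must be handled carefully rather than waved through — is precisely the bookkeeping for the "large $r$" terms ($r>d_1$): Lemma \ref{lem1} gives no information about $\sum \alpha^r/Q'(\alpha)$ when $r>d$, so one cannot simply discard those terms on the $\alpha_1$-side. The resolution above is that the degree budget is \emph{shared}: spending $r>d_1$ on the $x_1$-exponent leaves strictly fewer than $d_2+\cdots+d_n$ degrees for $G_r$, which by induction forces the inner multi-sum to be $0$ regardless of the uncontrolled $\alpha_1$-scalar. So the induction closes. Independence of the $Q_i$ is then automatic, since the final value $[x_1^{d_1}\cdots x_n^{d_n}]\,F$ makes no reference to them. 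I would present the proof as this single induction, with Lemma \ref{lem1} invoked both in the base case and in the inductive step.
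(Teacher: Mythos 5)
Your proof is correct. The paper takes a more direct route: by linearity it reduces at once to monomials $F = x_1^{r_1}\cdots x_n^{r_n}$ with $r_1+\cdots+r_n \le d_1+\cdots+d_n$, for which the sum factors completely as the product of $n$ one-variable sums $\prod_{i=1}^n\bigl(\sum_{Q_i(\alpha_i)=0}\alpha_i^{r_i}/Q_i'(\alpha_i)\bigr)$. The subtlety you rightly flag --- that Lemma \ref{lem1} says nothing about $\sum_\alpha \alpha^{r}/Q'(\alpha)$ when $r>d$ --- is then dispatched in one line: if the $r_i$ are not all equal to $d_i$, the constraint $\sum r_i\le\sum d_i$ forces some $r_j<d_j$, and that factor already vanishes by Lemma \ref{lem1}, annihilating the whole product regardless of any uncontrolled factors. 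Your induction on $n$ implements exactly the same degree-budget idea, but one variable at a time, at the cost of carrying the coefficient polynomials $G_r$ and their degree bounds through the induction. Both arguments are sound and both make the independence of the $Q_i$ transparent; the monomial factorization is simply shorter, since it exploits the full product structure of the multi-sum in a single step rather than peeling it off recursively.
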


\begin{proof}
By linearity, it is enough to consider monomials $F(x_1,\ldots,x_n) = x_1^{r_1}\cdots x_n^{r_n}, r_1+\cdots+r_n \leq d_1+\cdots+d_n$. The expression (\ref{id1}) factors as
$$\left(\sum_{Q_1(\alpha_1)=0}\frac{\alpha_1^{r_1}}{Q'_1(\alpha_1)}\right)\cdots \left(\sum_{Q_n(\alpha_n)=0}\frac{\alpha_n^{r_1}}{Q'_n(\alpha_n)}\right).$$
By Lemma \ref{lem1}, if $r_i = d_i$ for all $i$, then the product will be equal to $1$, else the product will be equal to $0$. The lemma follows.
\end{proof}

\begin{proof}[Proof of Theorem \ref{theo1}]
In order to prove Theorem \ref{theo1}, we apply Lemma \ref{lem2} to 
$$F(x_1,\ldots,x_n) = P(x_1,\ldots,x_n)\prod_{i\neq j}(x_i-x_j)\prod_{i<j}(x_i+x_j).$$
and 
$$Q_1(x) = \cdots = Q_n(x) = Q(x) = \prod_{i=1}^n(x-\lambda_i)(x+\lambda_i).$$
Note that $F(x_1,\ldots,x_n)$ is a symmetric polynomial of degree not greater than $n(2n-1)$ and $Q(x)$ is a monic polynomial of degree $2n$ with distinct roots $\lambda_1, \ldots,\lambda_n,-\lambda_1,\ldots,-\lambda_n$. In this case, the expression (\ref{id1}) can be reduced to be
\begin{equation}\label{id2}
n!\sum_{I\subseteq [n]}\frac{F(\lambda_I)}{Q'(\lambda_{1,I})\cdots Q'(\lambda_{n,I})}.
\end{equation}
For each $I\subseteq [n]$, we have
$$F(\lambda_I) = P(\lambda_I)\prod_{i\neq j}(\lambda_{i,I}-\lambda_{j,I})\prod_{i<j}(\lambda_{i,I}+\lambda_{j,I})$$
and
\begin{eqnarray*}
Q'(\lambda_{1,I})\cdots Q'(\lambda_{n,I}) & = & \prod_{i=1}^n\left(2\lambda_{i,I}\prod_{i\neq j}(\lambda_i-\lambda_j)(\lambda_i+\lambda_j)\right)\\
& = & 2^n \left(\prod_{i=1}^n\lambda_{i,I}\right)\left(\prod_{i\neq j}(\lambda_{i,I}-\lambda_{j,I})\right)\left(\prod_{i\neq j}(\lambda_{i,I}+\lambda_{j,I})\right)\\
&=& 2^n \left(\prod_{i=1}^n\lambda_{i,I}\right)\left(\prod_{i\neq j}(\lambda_{i,I}-\lambda_{j,I})\right)\left(\prod_{i<j}(\lambda_{i,I}+\lambda_{j,I})\right)^2.
\end{eqnarray*}
Thus the expression (\ref{id2}) can be reduced to be
$$\frac{n!}{2^n}\sum_{I\subseteq [n]}\frac{P(\lambda_I)}{\displaystyle\prod_{i<j}(\lambda_{i,I} + \lambda_{j,I})\prod_{i=1}^n\lambda_{i,I}}.$$
The identity follows.
\end{proof}

\begin{remark}
The argument of the proof in this section is more compact and applicable than that of \cite[Section 2]{H}. The original idea seems to be similar. For another proof of \cite[Theorem 1]{H}, we apply Lemma \ref{lem2} to  
$$F(x_1,\ldots,x_k) = P(x_1,\ldots,x_k)\prod_{i\neq j}(x_i-x_j)$$
and
$$Q_1(x)=\cdots = Q_k(x) = Q(x) = \prod_{i=1}^n(x-\lambda_i).$$
\end{remark}

\section{Localization in equivariant cohomology}

In this section, we recall some basic definitions and important results in the theory of equivariant cohomology. For more details on this theory, we refer to \cite{Br}. Throughout we consider all cohomologies with coefficients in the complex field $\mathbb C$.

Let $T = (\mathbb C^*)^n$ be an algebraic torus of dimension $n$, classified by the principal $T$-bundle $ET \to BT$, whose total space $ET$ is contractible. Let $X$ be a compact space endowed with a $T$-action. Put $X_T = X\times_T ET$, which is itself a bundle over $BT$ with fiber $X$. Recall that the $T$-equivariant cohomology of $X$ is defined to be $H^*_{T}(X) = H^*(X_T)$, where $H^*(X_T)$ is the ordinary cohomology of $X_T$. 
 
A $T$-equivariant vector bundle is a vector bundle $\mathcal E$ on $X$ together with a lifting of the action on $X$ to an action on $\mathcal E$ which is linear on fibers. Note that $\mathcal E_T$ is a vector bundle over $X_T$. The $T$-equivariant characteristic class $c^T(\mathcal E) \in H^*_T(X)$ is defined to be the characteristic class $c(\mathcal E_T)$.

Let $\chi(T)$ be the character group of the torus $T$. For each $\rho \in \chi(T)$, let $\mathbb C_{\rho}$ denote the one-dimensional representation of $T$ determined by $\rho$. Then $L_{\rho} = (\mathbb C_{\rho})_T$ is a line bundle over $BT$, and the assignment $\rho \mapsto -c_1(L_{\rho})$ defines an group isomorphism $f: \chi(T) \simeq H^2(BT)$, which induces a ring isomorphism $\Sym(\chi(T)) \simeq H^*(BT)$. We call $f(\rho)$ the weight of $\rho$. In particular, we denote by $\lambda_i$ the weight of $\rho_i$ defined by $\rho_i(x_1,\ldots,x_n) = x_i$. We thus obtain an isomorphism
$$H^*_T(\pt) = H^*(BT) \cong \mathbb C[\lambda_1,\ldots,\lambda_n].$$

Let $X^T$ be the fixed point locus of the torus action. An important result in equivariant cohomology, which is called the localization theorem, says that 
the inclusion $i : X^T \hookrightarrow X$ induces an isomorphism
$$i^* : H^*_T(X) \otimes_{\mathbb C[\lambda_1,\ldots,\lambda_n]} \mathcal R_T \simeq H^*_T(X^T)\otimes_{\mathbb C[\lambda_1,\ldots,\lambda_n]} \mathcal R_T,$$
where $\mathcal R_T \cong \mathbb C(\lambda_1,\ldots,\lambda_n)$ is the fraction field of $\mathbb C[\lambda_1,\ldots,\lambda_n]$. Historically, the  localization theorem was first studied by Borel \cite{Bo} and then further investigated by Quillen \cite{Q}. Independently, Atiyah-Bott \cite{AB} and Berline-Vergne \cite{BV} gave an explicit formula for the inverse isomorphism, provided that $X$ is a compact manifold and $X^T$ is finite. More precisely, we have the following formula.

\begin{theorem} [Atiyah-Bott \cite{AB}, Berline-Vergne \cite{BV}]
Suppose that $X$ is a compact manifold endowed with a torus action and the fixed point locus $X^T$ is finite. For $\alpha \in H^*_T(X)$, we have
\begin{equation}\label{ABBV}
\int_X\alpha = \sum_{p\in X^T}\frac{\alpha|_p}{e_p},
\end{equation}
where $e_p$ is the $T$-equivariant Euler class of the tangent bundle at the fixed point $p$, and $\alpha|_p$ is the restriction of $\alpha$ to the point $p$.
\end{theorem}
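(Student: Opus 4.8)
The plan is to derive \eqref{ABBV} from the localization isomorphism recalled just above, together with the equivariant self-intersection (Gysin) formula. Write $i\colon X^T\hookrightarrow X$ for the inclusion. Since $X^T$ is a finite set of points, its normal bundle $\nu$ in $X$ is the disjoint union of the tangent spaces $T_pX$, on each of which $T$ acts with no nonzero fixed vector; hence the equivariant Euler class $e_T(\nu)$ restricts at $p$ to $e_p$, a product of nonzero characters, and is therefore a unit in $H^*_T(X^T)\otimes\mathcal R_T$.

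First I would bring in the equivariant Gysin map $i_*$ and the self-intersection formula $i^*i_*(\xi)=e_T(\nu)\cdot\xi$, which follows from the equivariant Thom isomorphism for $\nu$ (the pullback along the zero section of the Thom class of $\nu$ is $e_T(\nu)$). Since $i^*$ is an isomorphism over $\mathcal R_T$ and $e_T(\nu)$ is invertible there, $i_*$ is also an isomorphism over $\mathcal R_T$, and every $\alpha\in H^*_T(X)\otimes\mathcal R_T$ equals $i_*\beta$ with $\beta:=(i^*\alpha)/e_T(\nu)$; in particular $\beta|_p=\alpha|_p/e_p$ for each $p\in X^T$.

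Then I would push forward to a point. By functoriality of the Gysin map, $\int_X\circ i_*=\int_{X^T}$, and since $X^T$ is zero-dimensional, $\int_{X^T}$ is simply the sum over the points of $X^T$ of the degree-zero component. Hence, working in $\mathcal R_T$,
$$\int_X\alpha=\int_X i_*\beta=\int_{X^T}\beta=\sum_{p\in X^T}\frac{\alpha|_p}{e_p},$$
and for $\alpha\in H^*_T(X)$ the left-hand side already lies in $H^*_T(\pt)\subset\mathcal R_T$, so this is the asserted formula. The substantive input — that $i^*$ becomes invertible after inverting the nontrivial characters — has already been recalled, and for finite $X^T$ the pushforward $\int_{X^T}$ is a plain sum, so the step most likely to need care is the equivariant self-intersection formula, i.e., setting up the equivariant Gysin map and Thom class cleanly, together with the identification of the normal bundle of a point $p$ in $X$ with $T_pX$ so that its equivariant Euler class is exactly the $e_p$ appearing in \eqref{ABBV}.
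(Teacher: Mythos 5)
The paper does not prove this statement: it is quoted as a classical theorem with the proof deferred to the cited references \cite{AB,BV}, so there is no internal argument to compare yours against. On its own terms, your proposal is the standard proof (essentially the one in Atiyah--Bott's original paper) and is correct in outline: the localization isomorphism plus the self-intersection formula $i^*i_*(\xi)=e_T(\nu)\cdot\xi$ give $\alpha=i_*\bigl((i^*\alpha)/e_T(\nu)\bigr)$ over $\mathcal R_T$, and functoriality of the Gysin map, $\int_X\circ\, i_*=\int_{X^T}$, together with the fact that $\int_{X^T}$ is a plain sum over finitely many points, yields \eqref{ABBV}. Your surjectivity argument for $i_*$ is the right one (apply injectivity of $i^*$ to $i_*\beta-\alpha$ with $\beta=(i^*\alpha)/e_T(\nu)$). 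The caveats you flag are indeed where the remaining work lies: constructing the equivariant Thom class and Gysin map requires orienting the normal spaces $T_pX$; at an isolated fixed point $T_pX$ decomposes into nontrivial two-dimensional weight spaces, and a choice of sign for each weight fixes a complex structure, making $e_p$ well defined up to the usual sign conventions and invertible in $\mathcal R_T$ because no weight vanishes. In the setting where the paper applies the formula, $X=LG(n)$ is a complex manifold with a holomorphic torus action, so these orientation issues disappear and $e_p$ is the product of the (nonzero) weights on the holomorphic tangent space, exactly as used in the proof of Theorem \ref{integral1}.
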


For many applications, the Atiyah--Bott--Berline--Vergne formula can be formulated in more down-to-earth terms. We are mainly interested in the computation of the integrals over $LG(n)$.

\begin{proof}[Proof of Theorem \ref{integral1}]
Consider the natural action of $T = (\mathbb C^*)^n$ on the complex vector space $V = \mathbb C^{2n}$ endowed with a standard symplectic form.
This induces a torus action on the Lagrangian Grassmannian $LG(n)$ with isolated fixed points. The fixed points of the torus action can be indexed by the subsets $I\subseteq [n]$:
$$p_I = Span\{e_i,e'_j : i \in I, j \not \in I\},$$
where $e_1,\ldots,e_n,e'_n,\ldots,e'_1$ are the coordinates on $\mathbb C^{2n}$. For each $p_I$, the weights of the torus action on the fiber $\mathcal S|_{p_I}$ are $\lambda_{i,I}, \ldots, \lambda_{n,I}$. By the symplectic form, the tangent bundle on $LG(n)$ can be identified with the second symmetric power $\Sym^2\mathcal S^\vee$ of the dual of $\mathcal S$. Thus the weights of the torus action on the tangent bundle at $p_I$ are
$$\{-\lambda_{i,I} - \lambda_{j,I} : 1 \leq i \leq j \leq n\}.$$
This implies that the $T$-equivariant Euler class of the tangent bundle at $p_I$ is 
$$e_{p_I} = \prod_{i\leq j}(-\lambda_{i,I}-\lambda_{j,I}) = (-1)^{\frac{n(n+1)}{2}}2^n\prod_{i<j}(\lambda_{i,I}+\lambda_{j,I})\prod_{i=1}^n\lambda_{i,I}.$$
By the assumption, the $T$-equivariant characteristic class at $p_I$ is  
$$\Phi^T(\mathcal S|_{p_I}) = P(\lambda_I).$$
By the Atiyah--Bott--Berline--Vergne formula (\ref{ABBV}), we have
\begin{align*}
\int_{LG(n)}\Phi(\mathcal S) & = \sum_{p_I}\frac{\Phi^T(\mathcal S|_{p_I})}{e_{p_I}}\\
& = \frac{(-1)^{\frac{n(n+1)}{2}}}{2^n}\sum_{I\subseteq[n]}\frac{P(\lambda_I)}{\displaystyle\prod_{i<j}(\lambda_{i,I}+\lambda_{j,I})\prod_{i=1}^n\lambda_{i,I}}.
\end{align*}
Combining this with Theorem \ref{theo1}, Theorem \ref{integral1} is proved as desired.
\end{proof}

\section{Relation to the ordinary Grassmannian}

Let $G(n,2n)$ be the Grassmannian parametrizing $n$-dimensional subspaces of a $2n$-dimensional complex vector space. We denote by $\sigma_{\delta_n}$ the Schubert class on $G(n,2n)$ with respect to the partition
$$\delta_n = (n-1,n-2,\ldots, 1).$$
It is easy to see that $\sigma_{\delta_n}$ is represented by the symmetric polynomial
$$(-1)^{\frac{n(n-1)}{2}}\prod_{i<j}(x_i+x_j).$$
Note that in this case $x_1,\ldots,x_n$ are the Chern roots of the tautological sub-bundle $\mathcal S$ on $G(n,2n)$. By abuse of notation, we denote by $\Psi(\mathcal S)$ a characteristic class of the tautological sub-bundle $\mathcal S$ on $G(n,2n)$. Let us consider the following integral
$$\int_{G(n,2n)}\Psi(\mathcal S) \sigma_{\delta_n}.$$
By \cite[Theorem 3]{H}, if $\Psi(\mathcal S)$ is represented by a symmetric polynomial $P(x_1,\ldots,x_n)$ of degree not greater than $\frac{n(n+1)}{2}$, then the integral
$$\int_{G(n,2n)}\Psi(\mathcal S)\sigma_{\delta_n} = (-1)^{\frac{n(n+1)}{2}}\frac{c(n)}{n!},$$
where $c(n)$ is the coefficient of $x_1^{2n-1}\cdots x_n^{2n-1}$ in the polynomial 
$$P(x_1,\ldots,x_n)\prod_{i\neq j}(x_i-x_j)\prod_{i<j}(x_i+x_j).$$
Combining this with Theorem \ref{integral1}, we have the following relation.

\begin{proposition}\label{relation1}
Suppose that both $\Phi(\mathcal S)$ and $\Psi(\mathcal S)$ are represented by the same symmetric polynomial $P(x_1,\ldots,x_n)$ of degree not greater than $\frac{n(n+1)}{2}$. Then we have
$$\int_{LG(n)}\Phi(\mathcal S) = \int_{G(n,2n)}\Psi(\mathcal S)\sigma_{\delta_n}.$$
\end{proposition}

The relation allows us to reduce the computation of the integral over $LG(n)$ into the computation of that over $G(n,2n)$. By abuse of notation, we denote by the $\sigma_i = c_i(\mathcal S^\vee)$ the $i$-th Chern classes of the dual of the tautological sub-bundles $\mathcal S$ on both $LG(n)$ and $G(n,2n)$. It is well-known that the integral cohomology rings of $LG(n)$ and $G(n,2n)$ are presented  as the quotient of $\mathbb Z[\sigma_1,\ldots,\sigma_n]$ by the relations coming from the corresponding Whitney sum formulas (see for example \cite{P}). Note that these rings are graded with the degree of $\sigma_i$ is equal to $i$. We have the following corollary.

\begin{corollary}\label{cor1}
Let $P(\sigma_1,\ldots,\sigma_n)$ be a characteristic class on $LG(n)$. Then we have
$$\int_{LG(n)}P(\sigma_1,\ldots,\sigma_n) = \int_{G(n,2n)}P(\sigma_1,\ldots,\sigma_n)\sigma_{\delta_n},$$
where the latter class $P(\sigma_1,\ldots,\sigma_n)$ is on $G(n,2n)$.
\end{corollary}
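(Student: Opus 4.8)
The plan is to reduce Corollary \ref{cor1} to Theorem \ref{relation1} by discarding everything but the top graded piece of the characteristic class. Writing $x_1,\ldots,x_n$ for the Chern roots of $\mathcal S$, so that $\sigma_i = c_i(\mathcal S^\vee) = (-1)^i e_i(x_1,\ldots,x_n)$ with $e_i$ the $i$-th elementary symmetric polynomial, the class $P(\sigma_1,\ldots,\sigma_n)$ is represented on both $LG(n)$ and $G(n,2n)$ by the symmetric polynomial $\widetilde P(x_1,\ldots,x_n) := P\bigl(-e_1,e_2,\ldots,(-1)^n e_n\bigr)$. First I would decompose $\widetilde P = \sum_{d\geq 0}\widetilde P_d$ into homogeneous components, $\widetilde P_d$ being of degree $d$ in $x_1,\ldots,x_n$; equivalently $\widetilde P_d$ comes from the part of $P$ of weighted degree $d$, where $\deg \sigma_i = i$.

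Next I would carry out two dimension counts. Since $\dim LG(n) = \tfrac{n(n+1)}{2}$, integration over $LG(n)$ annihilates every graded piece except the one in degree $\tfrac{n(n+1)}{2}$, so $\int_{LG(n)}P(\sigma_1,\ldots,\sigma_n) = \int_{LG(n)}\widetilde P_{n(n+1)/2}(\mathcal S)$. On the other side, $\sigma_{\delta_n}$ has degree $|\delta_n| = \tfrac{n(n-1)}{2}$ and $\dim G(n,2n) = n^2$, so $\widetilde P_d\cdot\sigma_{\delta_n}$ contributes to $\int_{G(n,2n)}$ only when $d + \tfrac{n(n-1)}{2} = n^2$, that is, when $d = n^2 - \tfrac{n(n-1)}{2} = \tfrac{n(n+1)}{2}$; hence $\int_{G(n,2n)}P(\sigma_1,\ldots,\sigma_n)\,\sigma_{\delta_n} = \int_{G(n,2n)}\widetilde P_{n(n+1)/2}(\mathcal S)\,\sigma_{\delta_n}$. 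The two surviving integrands are governed by the same symmetric polynomial $\widetilde P_{n(n+1)/2}$, which has degree exactly $\tfrac{n(n+1)}{2}$ and therefore meets the hypothesis of Theorem \ref{relation1}. Applying that theorem with $P$ replaced by $\widetilde P_{n(n+1)/2}$ and with $\Phi(\mathcal S) = \Psi(\mathcal S)$ the class it represents, and then chaining the three equalities, yields the corollary.

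I do not expect a genuine obstacle here: the entire content is the observation that Theorem \ref{relation1} carries a degree restriction which an arbitrary polynomial $P$ need not satisfy, together with the remark that both integrals in question only see the component of $P$ of weighted degree $\tfrac{n(n+1)}{2}$, which does satisfy it. The only points worth double-checking are the degenerate situations — if that component is zero then both sides vanish, and small cases such as $n=1$, where $\delta_1$ is the empty partition and $\sigma_{\delta_1}=1$, are consistent with the claim.
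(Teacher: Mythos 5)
Your proposal is correct and fills in exactly the argument the paper leaves implicit: the corollary is stated as an immediate consequence of Theorem \ref{relation1}, and your homogeneous decomposition together with the two dimension counts (only the weighted-degree-$\tfrac{n(n+1)}{2}$ piece survives either integral) is the right way to reconcile an arbitrary polynomial $P$ with the degree hypothesis of that theorem. No gaps; the degenerate cases you flag are handled correctly.
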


\begin{remark}
For more details on the computation of the integral over the ordinary Grassmannian $G(n,2n)$, we refer to \cite[Chapter 2 and Chapter 5]{H2}. An implementation of the computation was made in \textsc{Singular} \cite{H3}.
\end{remark}

\begin{example}
The integral
$$\int_{LG(3)}\sigma^2_1\sigma^2_2 = \int_{G(3,6)}\sigma^2_1\sigma^2_2\sigma_{2,1} = 4.$$
The computation is illustrated by \textsc{Singular} as follows:
\begin{verbatim}
    > LIB "schubert.lib";
    > variety G36 = Grassmannian(3,6);
    > def r = G36.baseRing; setring r;
    > poly S1 = SchubertClass(list(1));
    > poly S2 = SchubertClass(list(2));
    > poly S21 = SchubertClass(list(2,1));
    > integral(G36,S1^2*S2^2*S21);
    4
\end{verbatim}
\end{example}

\section{Degree formula for the Lagrangian Grassmannian}

In this section, we apply Theorem \ref{relation1} to explore a degree formula for $LG(n)$. Recall that the degree of a subvariety of a projective space is defined to be the number of its intersection points with a generic linear subspace of complementary dimension. The degree of $LG(n)$, considered as a subvariety of a projective space thanks to the Pl\"ucker embedding, is given by the following formula:
$$\deg(LG(n)) = \int_{LG(n)}\sigma_1^{\frac{n(n+1)}{2}}.$$
By Corollary \ref{cor1}, we have
$$\int_{LG(n)}\sigma_1^{\frac{n(n+1)}{2}} = \int_{G(n,2n)}\sigma_1^{\frac{n(n+1)}{2}}\sigma_{\delta_n}.$$
Thus, by \cite[Example 14.7.11]{F}, we obtain a degree formula for $LG(n)$.

\begin{proposition} \label{degree}
The degree of a Lagrangian Grassmannian is given by
$$\deg(LG(n))= \frac{\frac{n(n+1)}{2}!}{\displaystyle \prod_{i=1}^n(2i-1)!}\prod_{1 \leq i<j\leq n}(2j-2i).$$
\end{proposition}

\begin{example}
By the degree formula, the degree of $LG(3)$ is $16$. It can be computed directly as follows:
$$\deg(LG(3)) = \int_{G(3,6)}\sigma_1^6\sigma_{2,1} = 16.$$
The computation is illustrated by \textsc{Singular} as follows:
\begin{verbatim}
    > integral(G36,S1^6*S21);
    16
\end{verbatim}
\end{example}

\section{Geometry of the Lagrangian Grassmannian}

In this section, we present further applications related to the geometry of $LG(n)$. We first recall the definition of Schubert classes on $LG(n)$. For each positive integer $n$, we denote by $\mathcal D_n$ the set of strict partitions $\alpha = (\alpha_1 > \alpha_2, \cdots > \alpha_l > 0)$ with $\alpha_1 \leq n$. The number $l = l(\alpha)$ is the length of $\alpha$. Fix an isotropic flag of subspaces $F_i$ of $V$: $$0 \subset F_1 \subset F_2 \subset \cdots \subset F_n \subset V,$$ 
where $\dim(F_i) = i$ for all $i$ and $F_n$ is Lagrangian. For each strict partition $\alpha \in \mathcal D_n$, we define
$$X_\alpha = \{\Sigma \in LG(n) \mid \dim(\Sigma\cap F_{n+1-\alpha_i}) \geq i \text{ for } 1 \leq i \leq l(\alpha)\}$$
to be a {\it Schubert variety}. This is a subvariety of codimension $|\alpha| = \sum\alpha_i$ in $LG(n)$. The Poincar\'e class $\sigma_\alpha = [X_\alpha] \in H^{2|\alpha|}(LG(n),\mathbb Z)$, does not depend on the choice of the flag, is called a {\it Schubert class}. The classes $\sigma_i = c_i(\mathcal S^\vee)$, for $i = 1, \ldots, n$, are called {\it special Schubert classes}. According to Pragacz \cite{P}, the Giambelli formula for $LG(n)$ is obtained in two stages as follows:

\begin{itemize}
\item For $i > j >0$, we have
$$\sigma_{i,j} = \sigma_i\sigma_j + 2 \sum_{k=1}^{\min\{n-i,j\}}(-1)^k\sigma_{i+k}\sigma_{j-k}.$$
For convenience, we set $\sigma_0 = 1$. 
\item For any $\alpha \in \mathcal D_n$ of length at least $3$, we have 
\begin{equation}\label{Pfaffian}
\sigma_\alpha = \Pfaffian[\sigma_{\alpha_i,\alpha_j}]_{1\leq i < j \leq r},
\end{equation}
where $r$ is the least even integer which is greater than or equal to $l(\alpha)$. If $l(\alpha)$ is odd then we set $\alpha_r =0$ and $\sigma_{\alpha_i,0} = \sigma_{\alpha_i}$.
\end{itemize}

\begin{remark}
The Pfaffian formula (\ref{Pfaffian}) is equivalent to the Laplace-type expansion for Pfaffians
$$\sigma_\alpha = \sum_{k=1}^{r-1}(-1)^{k-1}\sigma_{\alpha_k,\alpha_r}\sigma_{\alpha\setminus\{\alpha_k,\alpha_r\}}.$$
\end{remark}
Note that the set $\{\sigma_\alpha : \alpha\in\mathcal D_n\}$ forms a linear basis for the cohomology ring $H^*(LG(n),\mathbb Z)$ (see for example \cite{P}). We denote by $e_{\alpha,\beta}^\gamma$ the {\it Schubert structure constant} which occurs in the cup product expansion
\begin{equation}\label{sc}
\sigma_\alpha\sigma_\beta = \sum_{|\gamma|=|\alpha|+|\beta|}e_{\alpha,\beta}^\gamma\sigma_\gamma
\end{equation}
in $H^*(LG(n),\mathbb Z)$. For each strict partition $\alpha$, we denote by $\alpha^\vee$ the partition whose parts complement the parts of $\alpha$ in the set $\{1,\ldots,n\}$. Note that 
$$\int_{LG(n)}\sigma_\alpha\sigma_\beta = \left\{\begin{matrix}
1 & \text{ if } & \beta = \alpha^\vee\\
0 & \text{ if } & \beta \neq \alpha^\vee
\end{matrix}\right..$$

We denote by $QH^*(LG(n),\mathbb Z)$ the {\it small quantum cohomology ring} of $LG(n)$. This is a deformation of $H^*(LG(n),\mathbb Z)$ which first appeared in the work of string theorists. The ring $QH^*(LG(n),\mathbb Z)$ is an algebra over $\mathbb Z[q]$, where $q$ is a formal variable of degree $n+1$. By taking $q=0$, we recover the classical cohomology ring $H^*(LG(n),\mathbb Z)$.

\begin{definition}
In $QH^*(LG(n),\mathbb Z)$, the {\it quantum product} is given by the following formula:
$$\sigma_\alpha\cdot\sigma_\beta = \sum \langle\sigma_\alpha,\sigma_\beta,\sigma_{\gamma^\vee}\rangle_d \sigma_\gamma q^d,$$
where the sum runs over $d \geq 0$ and $\gamma\in\mathcal D_n$ with $|\gamma| +d(n+1) = |\alpha|+|\beta|$. The {\it quantum structure constant} $\langle\sigma_\alpha,\sigma_\beta,\sigma_{\gamma^\vee}\rangle_d$ is equal to the {\it three-point, genus $0$ Gromov--Witten invariant}, which counts the number of rational curves of degree $d$ meeting three Schubert varieties $X_\alpha,X_\beta$ and $X_{\gamma^\vee}$ in general position.
\end{definition}

Kresch-Tamvakis \cite[Theorem 1]{KT1} gave a presentation for $QH^*(LG(n),\mathbb Z)$ and showed that the Gromov--Witten invariant $\langle\sigma_\alpha,\sigma_\beta,\sigma_{\gamma^\vee}\rangle_d$ can be determined by the ring of $\tilde{Q}$-polynomials. Using the Giambelli formula for $LG(n)$ and Corollary \ref{cor1}, we give another way to compute the Schubert structure constant $e_{\alpha,\beta}^\gamma$ and the  Gromov--Witten invariant $\langle\sigma_\alpha,\sigma_\beta,\sigma_{\gamma^\vee}\rangle_1$. More precisely, we show that both the Schubert structure constant $e_{\alpha,\beta}^\gamma$ and the  Gromov--Witten invariant $\langle\sigma_\alpha,\sigma_\beta,\sigma_{\gamma^\vee}\rangle_1$ can be expressed as integrals over the classical Grassmannians.

\begin{definition}
For each $\alpha \in \mathcal D_n$, we define the class $\tilde{Q}_\alpha \in H^*(G(n,2n),\mathbb Z)$ as follows:
\begin{itemize}
\item $\tilde{Q}_i = \sigma_i$ for all $i = 1,\ldots,n$. 
\item For $i > j >0$, we define
$$\tilde{Q}_{i,j} = \tilde{Q}_i\tilde{Q}_j + 2 \sum_{k=1}^{\min\{n-i,j\}}(-1)^k\tilde{Q}_{i+k}\tilde{Q}_{j-k}.$$
For convenience, we set $\tilde{Q}_0 = 1$.
\item For any $\alpha \in \mathcal D_n$ of length at least $3$, we define
$$\tilde{Q}_\alpha = \Pfaffian[\tilde{Q}_{\alpha_i,\alpha_j}]_{1\leq i < j \leq r},$$
where $r$ is the least even integer which is greater than or equal to $l(\alpha)$. If $l(\alpha)$ is odd then we set $\alpha_r =0$ and $\tilde{Q}_{\alpha_i,0} = \tilde{Q}_{\alpha_i}$.
\end{itemize} 
\end{definition}

\begin{theorem}\label{structure}
Let $\alpha,\beta,\gamma \in \mathcal D_n$ such that $|\gamma| = |\alpha|+|\beta|$. The Schubert structure constant $e_{\alpha,\beta}^\gamma$ can be expressed as an intersection number on the ordinary Grassmannian $G(n,2n)$, that is
$$e_{\alpha,\beta}^\gamma = \langle\sigma_\alpha,\sigma_\beta,\sigma_{\gamma^\vee}\rangle_0 = \int_{G(n,2n)}\tilde{Q}_\alpha\tilde{Q}_\beta\tilde{Q}_{\gamma^\vee}\sigma_{\delta_n}.$$
\end{theorem}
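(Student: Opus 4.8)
The plan is to reduce everything to the classical cohomology pairing on $LG(n)$ together with the relation between integrals on $LG(n)$ and on $G(n,2n)$ supplied by Corollary~\ref{cor1}. First I would recall that the structure constant is extracted by pairing against the dual basis: since $\{\sigma_\gamma : \gamma \in \mathcal D_n\}$ is a linear basis of $H^*(LG(n))$ and $\int_{LG(n)}\sigma_\gamma\sigma_{\gamma^\vee}=1$ while $\int_{LG(n)}\sigma_\gamma\sigma_\beta=0$ for $\beta\neq\gamma^\vee$, applying $\int_{LG(n)}(-)\,\sigma_{\gamma^\vee}$ to the expansion \eqref{sc} and using $|\gamma|=|\alpha|+|\beta|$ (which forces $|\gamma^\vee| = \frac{n(n+1)}{2}-|\gamma|$, so the total degree is $\frac{n(n+1)}{2}=\dim LG(n)$, i.e.\ the integral is a number) gives
$$e_{\alpha,\beta}^\gamma = \int_{LG(n)}\sigma_\alpha\sigma_\beta\sigma_{\gamma^\vee}.$$
The identification $e_{\alpha,\beta}^\gamma = \langle\sigma_\alpha,\sigma_\beta,\sigma_{\gamma^\vee}\rangle_0$ is then just the definition of the $d=0$ Gromov--Witten invariant as a classical triple intersection number (the $d=0$ term of the quantum product recovers the classical product).

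Next I would rewrite $\sigma_\alpha\sigma_\beta\sigma_{\gamma^\vee}$ as a polynomial in the special Schubert classes $\sigma_1,\dots,\sigma_n$ using the Giambelli formula for $LG(n)$ due to Pragacz: the two-stage recipe expresses each $\sigma_{\alpha_i,\alpha_j}$ via the quadratic relation and then each $\sigma_\alpha$ of length $\ge 3$ as a Pfaffian in the $\sigma_{\alpha_i,\alpha_j}$. This exhibits $\sigma_\alpha\sigma_\beta\sigma_{\gamma^\vee}$ as $R(\sigma_1,\dots,\sigma_n)$ for a universal polynomial $R$. Crucially, the Giambelli recipe defining $\tilde Q_\alpha\in H^*(G(n,2n))$ is \emph{formally identical} — it applies the same quadratic relation and the same Pfaffian formula to the classes $\sigma_i = c_i(\mathcal S^\vee)$ on $G(n,2n)$ — so $\tilde Q_\alpha\tilde Q_\beta\tilde Q_{\gamma^\vee} = R(\sigma_1,\dots,\sigma_n)$ with the \emph{same} polynomial $R$, now evaluated in $H^*(G(n,2n))$. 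Here one must note that in the $LG(n)$ relations the sum $\sum_{k=1}^{n-i}$ is truncated because $\sigma_{i+k}=0$ for $i+k>n$ in $H^*(LG(n))$, and the $\tilde Q$-definition hard-codes exactly this truncation ($\tilde Q_i = 0$ for $i>n$), so the two polynomial identities genuinely agree.

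Finally I would apply Corollary~\ref{cor1} with $P = R$: it yields
$$\int_{LG(n)} R(\sigma_1,\dots,\sigma_n) = \int_{G(n,2n)} R(\sigma_1,\dots,\sigma_n)\,\sigma_{\delta_n},$$
and combining the three displays gives $e_{\alpha,\beta}^\gamma = \int_{G(n,2n)}\tilde Q_\alpha\tilde Q_\beta\tilde Q_{\gamma^\vee}\sigma_{\delta_n}$, as claimed. I expect the only genuinely delicate point to be verifying that the Giambelli polynomial on $LG(n)$ and the defining polynomial for $\tilde Q_\alpha$ on $G(n,2n)$ really are the same expression in $\sigma_1,\dots,\sigma_n$ — that is, checking that every place where a relation of $H^*(LG(n))$ (e.g.\ $\sigma_i=0$ for $i>n$, or the relations from the Whitney sum for the Lagrangian tautological bundle) is used in deriving Giambelli is matched by the conventions built into the $\tilde Q$ definition, so that no further reduction modulo the ideal of $H^*(G(n,2n))$ is silently invoked. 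Once that bookkeeping is in place the argument is a direct chain of the three identities above; the degree bound $\deg R = \frac{n(n+1)}{2}$ needed to apply Corollary~\ref{cor1} is automatic since $R$ represents a top-degree class on $LG(n)$.
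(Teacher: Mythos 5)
Your proposal is correct and follows essentially the same route as the paper's own proof: express $e_{\alpha,\beta}^\gamma$ as the triple intersection number $\int_{LG(n)}\sigma_\alpha\sigma_\beta\sigma_{\gamma^\vee}$, rewrite the integrand via Pragacz's Giambelli formula as a polynomial in the special classes matching the $\tilde{Q}$-definition on $G(n,2n)$, and apply Corollary~\ref{cor1}. The paper states this in two sentences; you have merely made explicit the dual-basis pairing and the bookkeeping identifying the Giambelli polynomial with the $\tilde{Q}$-classes, which the paper leaves implicit.
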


\begin{proof}
By the definition and notation as above, the Schubert structure constant $e_{\alpha,\beta}^\gamma$ can be expressed as a triple intersection number
$$e_{\alpha,\beta}^\gamma = \langle\sigma_\alpha,\sigma_\beta,\sigma_{\gamma^\vee}\rangle_0 = \int_{LG(n)}\sigma_\alpha\sigma_\beta\sigma_{\gamma^\vee}.$$
Combining this with the Giambelli formula for $LG(n)$ and Corollary \ref{cor1}, the theorem follows.
\end{proof}

\begin{theorem}\label{gw}
Let $\alpha,\beta,\gamma \in \mathcal D_n$ such that $|\gamma| + n+1 = |\alpha|+|\beta|$. Then the Gromov--Witten invariant 
$$\langle\sigma_\alpha,\sigma_\beta,\sigma_{\gamma^\vee}\rangle_1 = \frac{1}{2}\int_{G(n+1,2n+2)}\tilde{Q}_\alpha\tilde{Q}_\beta\tilde{Q}_{\gamma^\vee}\sigma_{\delta_{n+1}},$$
where $\tilde{Q}_\alpha, \tilde{Q}_\beta$ and $\tilde{Q}_{\gamma^\vee}$ are determined as the classes on $G(n+1,2n+2)$.
\end{theorem}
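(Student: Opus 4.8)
The plan is to reduce the Gromov--Witten invariant to a classical triple intersection number on the larger Lagrangian Grassmannian $LG(n+1)$ via Kresch--Tamvakis, rewrite that intersection number through the Giambelli formula, and then transport it to $G(n+1,2n+2)$ using Corollary \ref{cor1}.

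First I would invoke \cite[Proposition 4]{KT1}, the ``quantum $=$ classical'' principle in degree $1$: for strict partitions $\alpha,\beta,\gamma^\vee \in \mathcal D_n$ one has
$$\langle\sigma_\alpha,\sigma_\beta,\sigma_{\gamma^\vee}\rangle_1 = \frac{1}{2}\int_{LG(n+1)}\sigma_\alpha\sigma_\beta\sigma_{\gamma^\vee},$$
where on the right the partitions (all of whose parts are $\le n$) are viewed as elements of $\mathcal D_{n+1}$ and $\sigma_{\gamma^\vee}$ still stands for the Schubert class indexed by the complement of $\gamma$ in $\{1,\ldots,n\}$. One should verify the dimension count that makes this the relevant case: since $|\gamma|+|\gamma^\vee| = \dim LG(n) = \frac{n(n+1)}{2}$ and, by hypothesis, $|\alpha|+|\beta| = |\gamma| + n + 1$, we get $|\alpha|+|\beta|+|\gamma^\vee| = \frac{n(n+1)}{2} + n + 1 = \frac{(n+1)(n+2)}{2} = \dim LG(n+1)$, so the right-hand side is a top-degree intersection number on $LG(n+1)$ and matches the curves of degree $d=1$.

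Next, by the Giambelli formula for $LG(n+1)$ due to Pragacz \cite{P}, each Schubert class $\sigma_\alpha$ on $LG(n+1)$ equals the polynomial in the special classes $\sigma_1,\ldots,\sigma_{n+1}$ produced by the two-stage procedure recalled above, with all cut-offs taken at $n+1$; this is exactly the class $\tilde Q_\alpha$ when $\tilde Q$ is formed on $G(n+1,2n+2)$ (so $\tilde Q_i = \sigma_i$ for $1\le i\le n+1$, $\tilde Q_i = 0$ otherwise, and the inner sum defining $\tilde Q_{i,j}$ runs to $k = (n+1)-i$). Hence $\sigma_\alpha\sigma_\beta\sigma_{\gamma^\vee} = \tilde Q_\alpha\tilde Q_\beta\tilde Q_{\gamma^\vee}$ in $H^*(LG(n+1))$. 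Applying Corollary \ref{cor1} with $n$ replaced by $n+1$ to the characteristic class $\tilde Q_\alpha\tilde Q_\beta\tilde Q_{\gamma^\vee}$ gives
$$\int_{LG(n+1)}\tilde Q_\alpha\tilde Q_\beta\tilde Q_{\gamma^\vee} = \int_{G(n+1,2n+2)}\tilde Q_\alpha\tilde Q_\beta\tilde Q_{\gamma^\vee}\,\sigma_{\delta_{n+1}},$$
and concatenating the three displays yields the asserted identity.

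The routine part is the last two steps, which merely string together Pragacz's Giambelli formula with Corollary \ref{cor1}. The main point requiring care is the first step: one must apply \cite[Proposition 4]{KT1} in precisely the right form --- that the degree $1$ Gromov--Witten invariant on $LG(n)$ is one half of the classical triple product on $LG(n+1)$, and that the index $\gamma^\vee$ occurring on the $LG(n+1)$ side is the complement of $\gamma$ taken in $\{1,\ldots,n\}$ (dictated by the Poincar\'e pairing on $LG(n)$) rather than in $\{1,\ldots,n+1\}$. Once these conventions are aligned, the factor $\frac{1}{2}$ in the statement is exactly the factor supplied by \cite[Proposition 4]{KT1}, while Corollary \ref{cor1} contributes none.
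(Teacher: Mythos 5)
Your proposal is correct and follows exactly the paper's own route: reduce to a classical triple intersection on $LG(n+1)$ via \cite[Proposition 4]{KT1}, rewrite the Schubert classes through Pragacz's Giambelli formula, and transport the integral to $G(n+1,2n+2)$ with Corollary \ref{cor1}. The extra details you supply (the dimension count on $LG(n+1)$ and the convention that $\gamma^\vee$ is the complement of $\gamma$ in $\{1,\ldots,n\}$) are accurate and merely make explicit what the paper leaves implicit.
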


\begin{proof}
By \cite[Proposition 4]{KT1}, the Gromov--Witten invariant 
$$\langle\sigma_\alpha,\sigma_\beta,\sigma_{\gamma^\vee}\rangle_1 = \frac{1}{2}\int_{LG(n+1)}\sigma_\alpha\sigma_\beta\sigma_{\gamma^\vee},$$
where $\sigma_\alpha, \sigma_\beta$ and $\sigma_{\gamma^\vee}$ denote the Schubert classes on $LG(n+1)$. Combining this with the Giambelli formula and Corollary \ref{cor1} for $LG(n+1)$, the theorem follows.
\end{proof}


\begin{example}
In $QH^*(LG(3),\mathbb Z)$, we have the quantum product
$$\sigma_{2,1}\cdot \sigma_2 = e_{(2,1),(2)}^{(3,2)} \sigma_{3,2} + \langle\sigma_{2,1},\sigma_2,\sigma_{3,2}\rangle_1 \sigma_1q.$$
The Schubert structure constant 
\begin{eqnarray*}
e_{(2,1),(2)}^{(3,2)} = \langle\sigma_{2,1},\sigma_2,\sigma_1\rangle_0 & = & \int_{G(3,6)}\tilde{Q}_{2,1}\tilde{Q}_2\tilde{Q}_1\sigma_{2,1},
\end{eqnarray*}
which is computed by \textsc{Singular} as follows:
\begin{verbatim}
    > LIB "schubert.lib";
    > variety G = Grassmannian(3,6);
    > def r = G.baseRing; setring r;
    > poly Q1 = QtildeClass(list(1));
    > poly Q2 = QtildeClass(list(2));
    > poly Q21 = QtildeClass(list(2,1));
    > poly S21 = SchubertClass(list(2,1));
    > integral(G,Q21*Q2*Q1*S21);
    2
\end{verbatim}
It follows that $e_{(2,1),(2)}^{(3,2)} = 2$. 
The Gromov--Witten invariant
\begin{eqnarray*}
\langle\sigma_{2,1},\sigma_2,\sigma_{3,2}\rangle_1 & = & \frac{1}{2}\int_{G(4,8)}\tilde{Q}_{2,1}\tilde{Q}_2\tilde{Q}_{3,2}\sigma_{3,2,1},
\end{eqnarray*}
which is computed by \textsc{Singular} as follows:
\begin{verbatim}
    > variety G = Grassmannian(4,8);
    > def r = G.baseRing; setring r;
    > poly Q21 = QtildeClass(list(2,1));
    > poly Q2 = QtildeClass(list(2));
    > poly Q32 = QtildeClass(list(3,2));
    > poly S321 = SchubertClass(list(3,2,1));
    > integral(G,Q21*Q2*Q32*S321);
    2
\end{verbatim}
This follows that $\langle\sigma_{2,1},\sigma_2,\sigma_{3,2}\rangle_1 = 1$. 
Thus the quantum product
$$\sigma_{2,1}\cdot \sigma_2 = 2 \sigma_{3,2} + \sigma_1q.$$
This agrees with the quantum Pieri rule given by Buch-Kresch-Tamvakis \cite[Theorem 3]{BKT1}.
\end{example}

\begin{example}
In $QH^*(LG(4),\mathbb Z)$, we have the quantum product
$$\sigma_{3,2}\cdot \sigma_{2,1} = e_{(3,2),(2,1)}^{(4,3,1)} \sigma_{4,3,1}  + \langle\sigma_{3,2},\sigma_{2,1},\sigma_{4,2,1}\rangle_1 \sigma_3q + \langle\sigma_{3,2},\sigma_{2,1},\sigma_{4,3}\rangle_1 \sigma_{2,1}q.$$
The Schubert structure constant $e_{(3,2),(2,1)}^{(4,3,1)}$ is computed as follows:
\begin{eqnarray*}
e_{(3,2),(2,1)}^{(4,3,1)} = \langle\sigma_{3,2},\sigma_{2,1},\sigma_2\rangle_0 & = & \int_{G(4,8)}\tilde{Q}_{3,2}\tilde{Q}_{2,1}\tilde{Q}_2\sigma_{3,2,1}.
\end{eqnarray*}
The integral is computed by \textsc{Singular} as follows:
\begin{verbatim}
    > variety G = Grassmannian(4,8);
    > def r = G.baseRing; setring r;
    > poly Q2 = QtildeClass(list(2));
    > poly Q21 = QtildeClass(list(2,1));
    > poly Q32 = QtildeClass(list(3,2));
    > poly S321 = SchubertClass(list(3,2,1));
    > integral(G,Q32*Q21*Q2*S321);
    2
\end{verbatim}
This means that $e_{(3,2),(2,1)}^{(4,3,1)} = 2$.\\
The Gromov--Witten invariants $\langle\sigma_{3,2},\sigma_{2,1},\sigma_{4,2,1}\rangle_1$ and $\langle\sigma_{3,2},\sigma_{2,1},\sigma_{4,3}\rangle_1$ are computed as follows:
\begin{eqnarray*}
\langle\sigma_{3,2},\sigma_{2,1},\sigma_{4,2,1}\rangle_1 & = & \frac{1}{2}\int_{G(5,10)}\tilde{Q}_{3,2}\tilde{Q}_{2,1}\tilde{Q}_{4,2,1}\sigma_{4,3,2,1}
\end{eqnarray*}
and 
\begin{eqnarray*}
\langle\sigma_{3,2},\sigma_{2,1},\sigma_{4,3}\rangle_1 & = & \frac{1}{2}\int_{G(5,10)}\tilde{Q}_{3,2}\tilde{Q}_{2,1}\tilde{Q}_{4,3}\sigma_{4,3,2,1}.
\end{eqnarray*}
These integrals are computed by \textsc{Singular} as follows:
\begin{verbatim}
    > variety G = Grassmannian(5,10);
    > def r = G.baseRing; setring r;
    > poly Q32 = QtildeClass(list(3,2));
    > poly Q21 = QtildeClass(list(2,1));
    > poly Q43 = QtildeClass(list(4,3));
    > poly Q421 = QtildeClass(list(4,2,1));
    > poly S4321 = SchubertClass(list(4,3,2,1));
    > integral(G,Q32*Q21*Q421*S4321);
    4
    > integral(G,Q32*Q21*Q43*S4321);
    2
\end{verbatim}
This means that $\langle\sigma_{3,2},\sigma_{2,1},\sigma_{4,2,1}\rangle_1 = 2$ and $\langle\sigma_{3,2},\sigma_{2,1},\sigma_{4,3}\rangle_1 = 1$. Thus, in $QH^*(LG(4),\mathbb Z)$, we have the quantum product
$$\sigma_{3,2}\cdot \sigma_{2,1} = 2 \sigma_{4,3,1} + 2 \sigma_3q + \sigma_{2,1}q.$$
\end{example}

\subsection*{Acknowledgements}
This paper was written while the first author visited Vietnam Institute for advanced study in Mathematics (VIASM), he would like to thank VIASM for the very kind support and hospitality. This research was partially supported by grants no. B2018.DNA.10 and B2019.DLA.03.


\begin{thebibliography}{99}

\bibitem{AC}
E. Akyıldız and J. B. Carrell, An algebraic formula for the Gysin homomorphism from $G/B$ to $G/P$, {\it Illinois J. Math.} {\bf 31} (1987), 312--320.

\bibitem{AB}
M. F. Atiyah and R. H. Bott, The moment map and equivariant cohomology, \emph{Topology} {\bf 23} (1984), 1--28.

\bibitem{BS}
G. B\'erczi and A. Szenes, Thom polynomials of Morin singularities, {\it Ann. of Math. (2)} {\bf 175} (2012), 567--629.

\bibitem{BV}
N. Berline and M. Vergne, Classes caract\'{e}ristiques \'{e}quivariantes. Formule de localisation en cohomologie \'{e}quivariante. (French) [Equivariant characteristic classes. Localization formula in equivariant cohomology] \emph{C. R. Acad. Sci. Paris Sér. I Math.} {\bf 295} (1982), no. 9, 539--541.

\bibitem{BH} 
B. Boe and H. Hiller, Pieri formula for $SO_{2n+1}/U_n$ and $Sp_n/U_n$, {\it Adv. in Math.} {\bf 62} (1986), 49--67.

\bibitem{Bo}
A. Borel, \emph{Seminar on transformation groups}, Ann. of Math. Stud., {\bf 46}, Princeton University Press, 1960.

\bibitem{B}
M. Brion, The push-forward and Todd class of flag bundles, In: {\it Parameter Spaces} (Pragacz, ed.), Banach Center Publications {\bf 36}, 1996, 45--50.  

\bibitem{Br}
M. Brion, Equivariant cohomology and equivariant intersection theory, In: \emph{Representation Theories and
Algebraic Geometry} (Montreal, PQ, 1997), {\bf 514}. NATO Adv. Sci. Inst. Ser. C Math. Phys. Sci., pp. 1--37. Kluwer Academic Publisher, Dordrecht (1998) (Notes by Alvaro Rittatore).

\bibitem{BKT1}
A. S. Buch, A. Kresch and H. Tamvakis, Gromov-Witten invariants on Grassmannians, {\it J. Amer. Math. Soc.} {\bf 16} (2003), 901--915.

\bibitem{BKT2}
A. S. Buch, A. Kresch and H. Tamvakis, Quantum Pieri rules for isotropic Grassmannians, {\it Invent. Math.} {\bf 178} (2009), 345--405.

\bibitem{BKT3}
A. S. Buch, A. Kresch and H. Tamvakis, A Giambelli formula for even orthogonal Grassmannians, {\it J. Reine Angew. Math.} {\bf 708} (2015), 17--48.

\bibitem{CL}
W. Y. C. Chen and J. D. Louck, Interpolation for symmetric functions, {\it Adv. in Math.} {\bf 117} (1996), 147--156.


\bibitem{DP}
L. Darondeau and P. Pragacz, Universal Gysin formula for flag bundles, {\it Internat. J. Math.} {\bf 28} (2017), 1750077.

\bibitem{F}
W. Fulton, {\it Intersection theory}, Second edition, Springer-Verlag, Berlin, 1998.

\bibitem {H2}
D. T. Hiep, {\it Intersection theory with applications to the computation of Gromov--Witten invariants}, Ph.D. thesis, University of Kaiserslautern, 2014.

\bibitem{H3}
D. T. Hiep, {\tt schubert.lib}. {A} {\sc Singular} {4-1-0} library for computing intersection theory and enumerative geometry. Available at \href{https://github.com/hiepdang/Singular/blob/master/schubert.lib}{https://github.com/hiepdang/Singular/blob/master/schubert.lib}.

\bibitem{H}
D. T. Hiep, Identities involving (doubly) symmetric polynomials and integrals over Grassmannians, \emph{Fundam. Math.} {\bf 246} (2019), 181--191.

\bibitem{I}
T. Ikeda, Schubert classes in the equivariant cohomology of the Lagrangian Grassmannian, {\it Adv. in Math.} {\bf 215} (2007), 1--23. 

\bibitem{IN}
T. Ikeda and H. Naruse, Excited Young diagrams and equivariant Schubert calculus, {\it Trans. Amer. Math. Soc.} {\bf 361} (2009), 5193--5221.

\bibitem{IMN}
T. Ikeda, L. Mihalcea and H. Naruse, Factorial $P$- and $Q$-Schur functions represent equivariant quantum Schubert classes, \emph{Osaka J. Math.} {\bf 53} (2016), no. 3, 591--619.

\bibitem{KT1}
A. Kresch and H. Tamvakis, Quantum cohomology of the Lagrangian Grassmannian, {\it J. Algebraic Geom.} {\bf 12} (2003), 777--810.

\bibitem{LP}
A. Lascoux and P. Pragacz, Operator Calculus for $\tilde{Q}$-Polynomials and Schubert Polynomials, {\it Adv. in Math.} {\bf 140} (1998), 1--43.

\bibitem{P}
P. Pragacz, Algebro-geometric applications of Schur $S$- and $Q$-polynomials, {\it Topics in invariant theory} (Paris, 1989/1990), 130--191, Lecture Notes in Math., {\bf 1478}, Springer, Berlin, 1991.

\bibitem{PR}
P. Pragacz and J. Ratajski, Formulas for Lagrangian and orthogonal degeneracy loci, $\tilde{Q}$-polynomial approach; {\it Compositio Math.} {\bf 107} (1997), 11--87.

\bibitem{Q}
D. Quillen, The spectrum of an equivariant cohomology ring: I, \emph{Ann. of Math. (2)} {\bf 94} (1971), 549--572.

\bibitem{S}
J. R. Stembridge, Shifted tableaux and the projective representations of symmetric groups, {\it Adv. in Math.} {\bf 74} (1989), 87--134.

\bibitem{T}
L. W. Tu, Computing the Gysin map using fixed points, Algebraic geometry and number theory, 135--160, Progr. Math., 321, Birkh\"auser/Springer, Cham, 2017. 

\bibitem{Z}
M. Zielenkiewicz, Integration over homogeneous spaces for classical Lie groups using iterated residues at infinity, \emph{Cent. Eur. J. Math.} {\bf 12} (2014), 574--583.

\end{thebibliography}
\end{document}